% ----------------------------------------------------------------
% AMS-LaTeX Paper ************************************************
% **** -----------------------------------------------------------

\documentclass[12pt]{article}
\usepackage[centertags]{amsmath}
\usepackage{amsfonts}
\usepackage{amssymb}
\usepackage{amsthm}
\usepackage{newlfont}
\usepackage{graphicx}

\newfont{\bb}{msbm10 at 12pt}
\def\r{\hbox{\bb R}}

\def\e{\hbox{\bf E}}
\def\t{\hbox{\bf T}}
\def\n{\hbox{\bf N}}
\def\b{\hbox{\bf B}}

%%%%%%%%%%%%%%%%%%%%%%%%%%%%%%%%%%%%%
\setlength{\textwidth}{15cm}
\setlength{\oddsidemargin}{1cm}
\setlength{\evensidemargin}{1cm}
\setlength{\textheight}{20cm}
\setlength{\parskip}{2mm}
\setlength{\parindent}{0em}
\setlength{\headsep}{1.5cm}

\newtheorem{theorem}{Theorem}[section]
\newtheorem{definition}[theorem]{Definition}

\newtheorem{corollary}[theorem]{Corollary}

%%%%%%%%%%%%%%%%%%%%%%%%%%%%%%%%%%%%%%%%%%%%%%%%%%%%%%%%%%

\begin{document}

\title{Timelike $B_2$-slant helices in Minkowski space $\e_1^4$}
\author{ Ahmad T. Ali \& Rafael L\'opez\footnote{Partially
supported by MEC-FEDER
 grant no. MTM2007-61775.}}
\date{}

\maketitle
\begin{abstract} We consider a unit speed timelike curve $\alpha$ in Minkowski 4-space $\e_1^4$ and denote the Frenet frame of $\alpha$ by $\{\t,\n,\b_1,\b_2\}$. We say that $\alpha$ is a generalized helix if  one of the unit vector fields of the Frenet frame has constant scalar product with a fixed direction $U$ of $\e_1^4$.  In this work we study those helices where the function $\langle\b_2,U\rangle$ is constant and we give different characterizations of such curves.
\end{abstract}

\emph{2000 Mathematics Subject Classification:}   53C50, 53B30.

\emph{Keywords}:  Minkowski space; Timelike curve; Frenet equations;  Slant helix.

%%%%%%%%%%%%%%%%%%%%%%%%%%%%%%%%%%%%%%%%%%%%%%%
\section{Introduction and statement of results}
%%%%%%%%%%%%%%%%%%%%%%%%%%%%%%%%%%%%%%%%%%%%%%%
A helix in Euclidean 3-space $\e^3$ is a curve   where the tangent lines make a constant angle with a fixed direction. A helix curve is characterized by the fact that the ratio $\tau/\kappa$ is constant along the curve, where $\tau$ and $\kappa$ denote the torsion and the curvature, respectively. Helices are well known curves in classical differential geometry of space curves \cite{mp} and we refer to the reader for recent
works on this type of curves \cite{gl,sc}. Recently, Izumiya and Takeuchi have introduced the concept of slant helix by saying that the normal lines make a constant angle with a fixed direction \cite{it}. They characterize a slant helix iff the function
\begin{equation}\label{slant}
\dfrac{\kappa^2}{(\kappa^2+\tau^2)^{3/2}}\Big(\dfrac{\tau}{\kappa}\Big)'
\end{equation}
is constant. The article \cite{it} motivated generalizations in a twofold sense: first, by considering arbitrary dimension of Euclidean space \cite{ky,okkk}; second, by considering analogous problems in other ambient spaces, for example, in  Minkowski space $\e_1^n$ \cite{ba,fgl,ko,ps,sy}.

In this work we consider  the generalization of the concept of helix in Minkowski 4-space,  when the helix is a timelike curve. We denote by $\e_1^4$ the  Minkowski 4-space, that is,  $\e_1^4$ is the real vector space $\r^4$ endowed  with the standard Lorentzian metric
$$\langle,\rangle=-dx_1^2+dx_2^2+dx_3^2+dx_4^2,$$
where $(x_1,x_2,x_3,x_4)$ is a rectangular coordinate system of $\r^4$. An arbitrary vector $v\in\e_1^4$ is said
spacelike (resp. timelike, lightlike) if $\langle v,v\rangle>0$ or $v=0$ (resp. $\langle v,v\rangle<0$, $\langle v,v\rangle =0$ and $v\not=0$).
Let $\alpha:I\subset\r\rightarrow \e_1^4$ be a  (differentiable) curve  with  $\alpha'(t)\not=0$, where
$\alpha'(t)=d\alpha/dt(t)$. The curve $\alpha$ is said timelike if all its velocity vectors $\alpha'(t)$ are timelike.
Then it is possible to re-parametrize $\alpha$ by a new parameter $s$, in such way that $\langle\alpha'(s),\alpha'(s)\rangle=-1$, for any $s\in I$.  We say then that $\alpha$ is a unit speed timelike curve.

Consider $\alpha=\alpha(s)$ a unit speed timelike curve in $\e_1^4$.
Let $\{\t(s),\n(s),\b_1(s),\b_2(s)\}$ be the moving frame along $\alpha$, where $\t,\n,\b_1$ and $\b_2$ denote the tangent, the principal normal, the first binormal and second binormal vector fields, respectively. Here $\t(s)$, $\n(s)$, $\b_1(s)$ and $\b_2(s)$ are mutually orthogonal vectors satisfying
$$\langle\t,\t\rangle=-1,\langle\n,\n\rangle=\langle\b_1,\b_1\rangle=\langle\b_2,\b_2\rangle=1.$$
Then the Frenet equations for $\alpha$ are given by
\begin{equation}\label{equi1}
 \left[
   \begin{array}{c}
     \t' \\
     \n' \\
     \b_1' \\
     \b_2'\\
   \end{array}
 \right]=\left[
           \begin{array}{cccc}
             0 & \kappa_1 & 0 & 0 \\
             \kappa_1 & 0 & \kappa_2 & 0 \\
             0 & -\kappa_2 & 0 & \kappa_3 \\
             0 & 0 & -\kappa_3 & 0 \\
           \end{array}
         \right]\left[
   \begin{array}{c}
     \t \\
     \n \\
     \b_1 \\
     \b_2\\
   \end{array}
 \right],
 \end{equation}

Recall the functions $\kappa_1(s)$, $\kappa_2(s)$ and $\kappa_3(s)$ are called respectively, the first, the second and the third curvatures of  $\alpha$. If $\kappa_3(s)=0$ for any $s\in I$, then $\b_2(s)$ is a constant vector $B$ and the curve $\alpha$ lies in a three-dimensional affine subspace orthogonal to $B$, which is isometric to the Minkowski 3-space $\e_1^3$.

We will assume throughout this work that all the three curvatures satisfy $\kappa_i(s)\not=0$ for any $s\in I$, $1\leq i\leq 3$.

\begin{definition} \label{df-1} A unit speed timelike curve $\alpha:I\rightarrow\e_1^4$ is said to be a generalized (timelike) helix if there exists a constant vector field $U$ different from zero and a  vector field $X\in\{\t,\n,\b_1,\b_2\}$ such that the function
$$s\longmapsto\langle X(s),U\rangle,\ \ s\in I$$
 is constant.
\end{definition}
In this work we are interested by generalized timelike helices in $\e_1^4$ where the function $\langle\b_2,U\rangle$ is constant. Motivated by the concept of slant helix in $\e^4$ \cite{okkk}, we give the following

\begin{definition} A unit speed timelike curve $\alpha$ is called a $B_2$-slant helix if there exists a constant vector field $U$ such that the function $\langle \b_2(s),U\rangle$ is constant.
\end{definition}
Our main result in this work is the following characterization of $B_2$-slant helices in the spirit of the one given in equation (\ref{slant})  for  a slant helix in $\e^3$:
\begin{quote} \emph{A unit speed timelike curve in $\e_1^4$  is a   $B_2$-slant helix if and only if the function
$$\dfrac{1}{\kappa_1^2}\Big(\dfrac{\kappa_3}{\kappa_2}\Big)^{\prime\,2}
-\Big(\dfrac{\kappa_3}{\kappa_2}\Big)^2$$
is constant.}
\end{quote}
When $\alpha$ is a lightlike curve, similar computations are been given by Erdogan and Yilmaz in \cite{ey}.

%%%%%%%%%%%%%%%%%%%%%%%%%%%%%%%%%%%%%%%%%%%
\section{Basic equations of timelike helices}
%%%%%%%%%%%%%%%%%%%%%%%%%%%%%%%%%%%%%%%%%%%

Let  $\alpha$ be a unit speed timelike curve in $\e_1^4$ and let $U$ be a unit constant vector field in $\e_1^4$. For each $s\in I$, the vector $U$ is expressed as linear combination of the orthonormal basis $\{\t(s),\n(s),\b_1(s),\b_2(s)\}$. Consider the differentiable functions $a_i$, $1\leq i\leq 4$,
\begin{equation}\label{u1}
U=a_1(s)\t(s)+a_2(s) \n(s)+a_3(s) \b_1(s)+a_4(s)\b_2(s),\ \ s\in I,
\end{equation}
that is,
$$a_1=-\langle\t,U\rangle, \ a_2=\langle\n,U\rangle,\ a_3=\langle\b_1,U\rangle,\ a_4=\langle\b_2,U\rangle.$$
Because the vector field $U$ is constant, a differentiation in (\ref{u1}) together (\ref{equi1}) gives the following ordinary differential equation system
\begin{equation}\label{u2}
\left.\begin{array}{ll}
a_1'+\kappa_1 a_2&=0\\
a_2'+\kappa_1 a_1-\kappa_2 a_3&=0\\
a_3'+\kappa_2 a_2-\kappa_3 a_4&=0\\
a_4'+\kappa_3 a_3 &=0
\end{array}\right\}
\end{equation}
In the case that $U$ is spacelike (resp. timelike), we will assume that $\langle U,U\rangle=1$ (resp. $-1$). This means that the constant $M$ defined by
\begin{equation}\label{equi2}
M:=\langle U,U\rangle=-a_1^2+a_2^2+a_3^2+a_4^2
\end{equation}
is $1$, $-1$ or $0$ depending if $U$ is spacelike, timelike or lightlike, respectively.

We now suppose that $\alpha$ is a generalized helix. This means that there exists $i$, $1\leq i\leq 4$, such that the function $a_i=a_i(s)$ is constant. Thus in the system (\ref{u2}) we have four differential equations and three derivatives of functions.

The first case that appears is that the function $a_1$ is constant, that is, the function $\langle \t(s),U\rangle$ is constant. If $U$ is  timelike, that is, the tangent lines of $\alpha$ make a constant (hyperbolic) angle with a fixed timelike direction, the curve $\alpha$ is called a timelike cylindrical helix \cite{ko}. Then it is known that  $\alpha$ is timelike cylindrical helix iff the function
$$\dfrac{1}{\kappa_3^2}\Big(\dfrac{\kappa_1}{\kappa_2}\Big)^{\prime\,2}
+\Big(\dfrac{\kappa_1}{\kappa_2}\Big)^2$$
is constant \cite{ko}.

However the hypothesis that $U$ is timelike can be dropped and we can assume that $U$ has any causal character, as for example,  spacelike or lightlike. We explain this situation. In Euclidean space one speaks  on the angle that makes a fixed direction with the tangent lines (cylindrical helices) or the normal lines (slant helices).  In Minkowski space, one can only speak about the angle between two vectors $\{u,v\}$ if both are spacelike (Euclidean angle) or both are timelike and are in the same timecone (hyperboilc angle). See \cite[page 144]{on}.  This is the reason to avoid any reference about  'angles' in  Definition \ref{df-1}.

Suppose now that the function $\langle\t(s),U\rangle$ is constant, independent on the causal character of $U$.
From the expression of $U$ in (\ref{u1}), we know that $a_1'=0$ and by using (\ref{u2}), we obtain $a_2=0$ and
$$a_3=\frac{\kappa_1}{\kappa_2}a_1,\ a_3'=\kappa_3 a_4,\ a_4'+\kappa_3 a_3=0.$$
Consider the change of variable $t(s)=\int_0^s\kappa_3(x) dx$. Then $\dfrac{d t}{ds}(s)=\kappa_3(s)$ and the last two above equations write as $a_3''(t)+a_3(t)=a_4''(t)+a_4(t)=0$. Then one obtains that there exist constants $A$ and $B$ such that
$$a_3(s)=A\cos\int_0^s\kappa_3(s)ds+B\sin\int_0^s\kappa_3(s)ds$$
$$a_4(s)=-A\sin\int_0^s\kappa_3(s)ds+B\cos\int_0^s\kappa_3(s)ds.$$
Since $a_3^2+a_4^2=\langle U,U\rangle+a_1^2$ is constant, and
$$a_4=\frac{1}{\kappa_3} a_3'=\frac{1}{\kappa_3}\Big(\frac{\kappa_1}{\kappa_2}\Big)'a_1,$$
it follows that
$$\frac{1}{\kappa_3^2}\Big(\frac{\kappa_1}{\kappa_2}\Big)'^{2}+\Big(\frac{\kappa_1}{\kappa_2}\Big)^2=\text{constant}.$$
Then one can prove the following

\begin{theorem} Let $\alpha$ be a unit speed timelike curve in $\e_1^4$. Then the function $\langle\t(s),U\rangle$ is constant for a fixed constant vector field $U$ if and only if the the function
$$\dfrac{1}{\kappa_3^2}\Big(\dfrac{\kappa_1}{\kappa_2}\Big)^{\prime\,2}
+\Big(\frac{\kappa_1}{\kappa_2}\Big)^2$$
is constant.
\end{theorem}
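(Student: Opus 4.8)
The forward implication is essentially already in hand from the computation preceding the statement: assuming $\langle\t,U\rangle$ constant forces $a_1'=0$, whence the first three equations of (\ref{u2}) give $a_2=0$, $a_3=(\kappa_1/\kappa_2)a_1$ and $a_4=(1/\kappa_3)(\kappa_1/\kappa_2)'a_1$, and then the constancy of $a_3^2+a_4^2=M+a_1^2$ yields the displayed function as a constant (here $a_1\neq0$, since $a_1=0$ would propagate through (\ref{u2}) to $U=0$). So my plan is to concentrate entirely on the converse.

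Assume that $f:=\frac{1}{\kappa_3^2}(\kappa_1/\kappa_2)^{\prime\,2}+(\kappa_1/\kappa_2)^2$ is constant. I would construct $U$ explicitly by reverse-engineering the relations above. Fixing a nonzero constant $c$, I set
$$a_1=c,\qquad a_2=0,\qquad a_3=\frac{\kappa_1}{\kappa_2}\,c,\qquad a_4=\frac{c}{\kappa_3}\Big(\frac{\kappa_1}{\kappa_2}\Big)',$$
and define $U=a_1\t+a_2\n+a_3\b_1+a_4\b_2$. Since $a_1=c\neq0$, the field $U$ is nonzero, and a one-line computation gives $\langle U,U\rangle=c^2(f-1)$, so $U$ has a well-defined (fixed) causal character. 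Everything then reduces to showing that $U$ is constant, for once $U'=0$ we get $\langle\t,U\rangle=-c$ constant, as required.

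To check $U'=0$, I would differentiate $U$, substitute the Frenet equations (\ref{equi1}), and collect the result into the four scalar coefficients that form the left-hand sides of (\ref{u2}). The first vanishes because $a_1'=0$ and $a_2=0$; the second vanishes by the choice $a_3=(\kappa_1/\kappa_2)\,a_1$; the third vanishes because $a_4=a_3'/\kappa_3$ together with $a_2=0$. The only substantive coefficient is the fourth, $a_4'+\kappa_3a_3$, and this is exactly where the hypothesis must be spent. Writing $g=\kappa_1/\kappa_2$ and $h=g'/\kappa_3$ (so $a_3=cg$, $a_4=ch$), the constancy of $f=h^2+g^2$ gives, after differentiating and using $g'=\kappa_3h$,
$$0=\tfrac12f'=h\,h'+g\,g'=h\big(h'+\kappa_3 g\big),$$
so that $h'+\kappa_3g=0$ wherever $h\neq0$; this is precisely $a_4'+\kappa_3a_3=c(h'+\kappa_3g)=0$.

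The step I expect to be delicate is this fourth coefficient at the points where $h=g'=0$, i.e. where $\kappa_1/\kappa_2$ is stationary, since there the factorization only delivers $h(h'+\kappa_3g)=0$ and does not by itself force $h'+\kappa_3g=0$. On the open set where $g'\neq0$ the conclusion is immediate; to extend it to all of $I$ I would either impose the natural nondegeneracy hypothesis $g'\neq0$ (equivalently, $\kappa_1/\kappa_2$ nonconstant), or invoke continuity of $U'$ to rule out that isolated stationary points break the identity $a_4'+\kappa_3a_3=0$. That this locus genuinely requires care is shown by the extreme case $\kappa_1/\kappa_2\equiv\text{const}$: then $f$ is automatically constant, yet the fourth equation of (\ref{u2}) would read $\kappa_3gc=0$, which is impossible for $c\neq0$. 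Isolating and disposing of this degenerate possibility is, I expect, the only real obstacle in the proof.
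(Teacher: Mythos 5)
Your proposal is correct and follows essentially the same route as the paper: the forward implication is exactly the computation the paper carries out before stating the theorem, and your converse (reverse-engineer the coefficients $a_1=c$, $a_2=0$, $a_3=cg$, $a_4=ch$ with $g=\kappa_1/\kappa_2$, $h=g'/\kappa_3$, then kill $U'$ via the Frenet equations and the differentiated hypothesis) is precisely the method the paper uses for its analogous Theorem 3.1, the only cosmetic difference being your normalizing constant $c$. The delicate point you isolate is genuine, and it is one the paper silently skips: differentiating the hypothesis only yields $h\,(h'+\kappa_3 g)=0$, which forces the fourth coefficient $a_4'+\kappa_3 a_3=c(h'+\kappa_3 g)$ to vanish on the closure of the set where $h\neq 0$ (isolated zeros of $g'$ are absorbed by continuity, as you say), but on any subinterval where $\kappa_1/\kappa_2$ is constant one gets $a_4'+\kappa_3 a_3=c\,\kappa_3\,\kappa_1/\kappa_2\neq 0$; indeed, running the system (\ref{u2}) there shows \emph{no} nonzero constant $U$ with $\langle\t,U\rangle$ constant can exist, even though the displayed curvature function is constant. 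So the equivalence as stated needs the nondegeneracy hypothesis you propose (e.g.\ $(\kappa_1/\kappa_2)'\neq 0$, or at least that its zero set has empty interior); your treatment is, if anything, more careful than the paper's own.
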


When $U$ is a timelike constant vector field, we re-discover the result given in \cite{ko}.

%%%%%%%%%%%%%%%%%%%%%%%%%%%%%%%%%%%%%%%%%%%%%%%%%%%%%%%%%%
\section{Timelike $B_2$-slant helices}
%%%%%%%%%%%%%%%%%%%%%%%%%%%%%%%%%%%%%%%%%%%%%%%%%%%%%%%%%%%%

Let $\alpha$ be  a  $B_2$-slant helix, that is, a unit speed timelike curve in $\e_1^4$ such that the function $\langle \b_2(s),U\rangle$, $s\in I$, is constant for a fixed constant vector field $U$ . We point out that $U$ can be of any causal character. In the particular case that $U$ is spacelike, and since $\b_2$ is too, we can say that a $B_2$-slant helix is a timelike curve whose second binormal lines make a constant angle with a fixed (spacelike) direction.

Using the system (\ref{u1}), the fact that $\alpha$ is a $B_2$-slant helix means that   the function $a_4$  is constant. Then (\ref{u2})  gives  $a_3=0$ and  (\ref{u1}) writes as
\begin{equation}\label{equi3}
U=a_1(s)\t(s)+a_2(s)\n(s)+a_4\b_2(s),\ a_4\in\r
\end{equation}
where
\begin{equation}\label{equi4}
a_2=\frac{\kappa_3}{\kappa_2}a_4=-\frac{1}{\kappa_1}a_1',\hspace*{1cm}\ a_2'+\kappa_1 a_1=0.
\end{equation}
We remark that $a_4\not=0$: on the contrary, and from (\ref{u2}), we conclude $a_i=0$, $1\leq i\leq 4$, that is, $U=0$: contradiction.

It follows from (\ref{equi4}) that  the function $a_1$ satisfies the following second order  differential equation:
$$\frac{1}{\kappa_1}\dfrac{d}{ds}\Big(\frac{1}{\kappa_1} a_1'\Big)-a_1=0.$$
If we change variables in the above equation as $\dfrac{1}{\kappa_1}\dfrac{d}{ds}=\dfrac{d}{dt}$, that is, $t=\int_0^s\kappa_1(s)ds$, then we get
$$\dfrac{d^2a_1}{dt^2}-a_1=0.$$
The general solution of this equation is
\begin{equation}\label{equi5}
a_1(s)=A\cosh\int_0^s\kappa_1(s)ds+B\sinh\int_0^s\kappa_1(s)ds,
\end{equation}
where $A$ and $B$ are arbitrary constants.
From (\ref{equi4}) and (\ref{equi5}) we have
\begin{equation}\label{equi6}
a_2(s)=-A\sinh\int_0^s\kappa_1(s)ds-B\cosh\int_0^s\kappa_1(s)ds.
\end{equation}
The above expressions of $a_1$ and $a_2$ give
\begin{equation}\label{equi7}
\begin{array}{ll}
A&=-\Big[\dfrac{\kappa_3}{\kappa_2}\sinh\int_0^s\kappa_1(s)ds
+\dfrac{1}{\kappa_1}\Big(\dfrac{\kappa_3}{\kappa_2}\Big)^{\prime}\cosh\int_0^s\kappa_1(s)ds\Big]a_4,\\
B&=-\Big[\dfrac{1}{\kappa_1}\Big(\dfrac{\kappa_3}{\kappa_2}\Big)^{\prime}\sinh\int_0^s\kappa_1(s)ds
+\dfrac{\kappa_3}{\kappa_2}\cosh\int_0^s\kappa_1(s)ds\Big]a_4.
\end{array}
\end{equation}
From  (\ref{equi7}),
$$A^2-B^2=\Big[\dfrac{1}{\kappa_1^2}\Big(\dfrac{\kappa_3}{\kappa_2}\Big)^{\prime\,2}
-\dfrac{\kappa_3^2}{\kappa_2^2}\Big]a_4^2.$$
Therefore
\begin{equation}\label{equi8}
\dfrac{1}{\kappa_1^2}\Big(\dfrac{\kappa_3}{\kappa_2}\Big)^{\prime\,2}
-\dfrac{\kappa_3^2}{\kappa_2^2}=\text{constant}:=m.
\end{equation}

Conversely, if the condition (\ref{equi8}) is satisfied for a  timelike curve, then we can always find a constant  vector  field $U$ such that the function $\langle\b_2(s),U\rangle$ is constant: it is sufficient if we define
$$U=\Big[-\dfrac{1}{\kappa_1}\Big(\dfrac{\kappa_3}{\kappa_2}\Big)'\t+
\dfrac{\kappa_3}{\kappa_2}\n+\b_2\Big].$$
By taking account of the differentiation of (\ref{equi8}) and the Frenet equations (\ref{equi1}), we have that $\dfrac{dU}{ds}=0$  and this means that $U$ is a constant vector.
On the other hand, $\langle\b_2(s),U\rangle=1$. The above computations can be summarized as follows:

\begin{theorem}\label{th-31} Let $\alpha$ be a unit speed timelike curve in $\e_1^4$. Then $\alpha$ is a   $B_2$-slant helix if and only if the function
$$\dfrac{1}{\kappa_1^2}\Big(\dfrac{\kappa_3}{\kappa_2}\Big)^{\prime\,2}
-\Big(\dfrac{\kappa_3}{\kappa_2}\Big)^2$$
is constant.
\end{theorem}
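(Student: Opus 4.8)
The plan is to convert the geometric hypothesis into the linear ODE system (\ref{u2}) satisfied by the coordinate functions $a_i$ of a constant vector $U$ relative to the Frenet frame, and then to read off the claimed invariant from the integration constants.

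For the direct implication, I would begin by noting that $\alpha$ being a $B_2$-slant helix means $a_4=\langle\b_2,U\rangle$ is constant, so $a_4'=0$. The fourth equation of (\ref{u2}) together with $\kappa_3\neq 0$ then forces $a_3=0$, collapsing (\ref{u1}) to the three-term expression (\ref{equi3}). The third and first equations of (\ref{u2}) next yield the two expressions $a_2=(\kappa_3/\kappa_2)a_4=-(1/\kappa_1)a_1'$ recorded in (\ref{equi4}), while the second equation reduces to $a_2'+\kappa_1 a_1=0$.

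The core of the argument is then to solve the resulting second-order equation $\frac{1}{\kappa_1}\frac{d}{ds}\big(\frac{1}{\kappa_1}a_1'\big)-a_1=0$. I would introduce the variable $t=\int_0^s\kappa_1(x)\,dx$, under which it becomes $\frac{d^2a_1}{dt^2}-a_1=0$, with general solution $a_1=A\cosh t+B\sinh t$ and correspondingly $a_2=-A\sinh t-B\cosh t$, as in (\ref{equi5})--(\ref{equi6}). The decisive and most delicate step is to invert these relations: combining the hyperbolic solution with $a_2=(\kappa_3/\kappa_2)a_4$ and $a_2=-(1/\kappa_1)a_1'$ produces the explicit formulas (\ref{equi7}) for $A$ and $B$. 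Here the identity $\cosh^2-\sinh^2=1$ is exactly what makes the difference $A^2-B^2$ collapse to $\big[\frac{1}{\kappa_1^2}(\frac{\kappa_3}{\kappa_2})'^2-\frac{\kappa_3^2}{\kappa_2^2}\big]a_4^2$; since $A$, $B$ and $a_4$ are all constant, the bracketed function must be constant, which is one direction.

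For the converse I would argue constructively rather than by solving equations. Assuming the invariant (\ref{equi8}) is constant, I define the candidate field $U=-\frac{1}{\kappa_1}(\frac{\kappa_3}{\kappa_2})'\t+\frac{\kappa_3}{\kappa_2}\n+\b_2$ and differentiate it directly using the Frenet equations (\ref{equi1}). One must check that every component of $U'$ vanishes; the constancy hypothesis enters precisely in the component where differentiating (\ref{equi8}) supplies the identity needed to cancel the surviving terms. Once $U'=0$ is established, $U$ is a constant vector with $\langle\b_2,U\rangle=1$, so $\alpha$ is a $B_2$-slant helix. Overall I expect the forward direction—solving for $A$ and $B$ and forming $A^2-B^2$—to be the main obstacle, with the converse amounting to a routine verification.
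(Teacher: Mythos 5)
Your proposal reproduces the paper's own proof essentially step for step: the forward direction via $a_4'=0\Rightarrow a_3=0$, the reduction to the hyperbolic ODE with solutions (\ref{equi5})--(\ref{equi6}), the inversion (\ref{equi7}) and the identity $\cosh^2-\sinh^2=1$ collapsing $A^2-B^2$ to the invariant; and the converse via the explicit field $U=-\frac{1}{\kappa_1}\big(\frac{\kappa_3}{\kappa_2}\big)'\t+\frac{\kappa_3}{\kappa_2}\n+\b_2$ checked to be parallel with the Frenet equations. The only caveat---which you share with the paper---is that turning the differentiated invariant (\ref{equi10}) into the cancellation needed for $U'=0$ implicitly divides by $\big(\frac{\kappa_3}{\kappa_2}\big)'$, so the degenerate case where $\kappa_3/\kappa_2$ is a nonzero constant is covered by neither argument.
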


From (\ref{equi2}), (\ref{equi5}) and  (\ref{equi6}) we get
$$A^2-B^2=a_4^2-M= a_4^2\ m.$$
Thus, the sign of the constant $m$ agrees with the one $A^2-B^2$. So, if $U$ is timelike or lightlike, $m$ is positive. If $U$ is spacelike, then the sign of $m$ depends on $a_4^2-1$. For example, $m=0$ iff $a_4^2=1$. With similar computations as above, we have

\begin{corollary} Let $\alpha$ be a unit speed timelike curve in $\e_1^4$ and let $U$ be a unit spacelike constant vector field.
Then $\langle\b_2(s),U\rangle^2=1$  for any $s\in I$ if and only if there exists a constant $A$ such that
$$\dfrac{\kappa_3}{\kappa_2}(s)=A \exp{\Big(\int_0^s\kappa_1(t) dt\Big)}.$$
\end{corollary}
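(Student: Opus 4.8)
The plan is to reduce the whole statement to the invariant $m$ of (\ref{equi8}) and then integrate a single separable first-order ODE. Throughout write $a_4=\langle\b_2,U\rangle$ and $f=\kappa_3/\kappa_2$; note that $f\neq0$ since $\kappa_2,\kappa_3$ never vanish, and $\kappa_1\neq0$ as well. For the forward implication, I would first observe that the hypothesis $\langle\b_2(s),U\rangle^2=1$ for all $s$ makes the continuous function $a_4$ take values in $\{-1,+1\}$ on the connected interval $I$, hence constant; thus $\alpha$ is automatically a $B_2$-slant helix and the apparatus of Section 3 applies. In particular the constants $A,B$ of (\ref{equi5})--(\ref{equi6}) and the identity $A^2-B^2=a_4^2-M=a_4^2\,m$ recorded just before the corollary are available. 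Since $U$ is unit spacelike we have $M=1$, so $a_4^2-1=a_4^2\,m$, and because $a_4\neq0$ this gives the pivotal equivalence $a_4^2=1\iff m=0$.

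It remains to integrate the condition $m=0$, i.e. $\frac{1}{\kappa_1^2}(f')^2=f^2$. This reads $f'/(\kappa_1 f)=\pm1$; being a continuous $\{-1,+1\}$-valued function on the connected $I$, the ratio has constant sign, so $f'/f=\pm\kappa_1$. Integrating from $0$ to $s$ yields $f(s)=f(0)\exp\!\big(\pm\int_0^s\kappa_1\big)$, and the $+$ branch is exactly the asserted $\kappa_3/\kappa_2=A\exp\!\big(\int_0^s\kappa_1\big)$ with $A=f(0)$.

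For the converse I would run this backwards and exhibit the vector explicitly, imitating the converse of Theorem \ref{th-31}. If $\kappa_3/\kappa_2=A\exp\!\big(\int_0^s\kappa_1\big)$ then $f'=\kappa_1 f$, so $(f'/\kappa_1)^2=f^2$, i.e. $m=0$. Now set $V=-\frac{1}{\kappa_1}\big(\kappa_3/\kappa_2\big)'\,\t+\frac{\kappa_3}{\kappa_2}\,\n+\b_2$. Differentiating $V$ via (\ref{equi1}) and using that $m=0$ forces $\big(f'/\kappa_1\big)'=\kappa_1 f$ (note $m=0$ also guarantees $f'/\kappa_1\neq0$, since $(f'/\kappa_1)^2=f^2\neq0$) gives $dV/ds=0$, so $V$ is constant; moreover $\langle V,V\rangle=-\frac{1}{\kappa_1^2}(f')^2+f^2+1=1-m=1$, so $V$ is unit spacelike, while $\langle\b_2,V\rangle=1$ gives $\langle\b_2,V\rangle^2=1$. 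Taking $U=V$ closes the equivalence.

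The one delicate point is the sign in the exponent: solving $m=0$ yields only $\exp\!\big(\pm\int_0^s\kappa_1\big)$, whereas the statement records the $+$ branch. I expect this to be the main (minor) obstacle — one must either regard the displayed formula as representing one of the two sign choices, the $-$ branch being entirely analogous and arising from the opposite orientation relative to the Frenet frame, or else state the result with $\exp\!\big(\pm\int_0^s\kappa_1\big)$. Everything else is a routine separable integration resting on the already-established identity $A^2-B^2=a_4^2\,m$.
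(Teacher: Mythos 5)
Your proposal is correct and takes essentially the same route as the paper: the paper's ``proof'' of this corollary is just the remark preceding it (for unit spacelike $U$ one has $M=1$, so $A^2-B^2=a_4^2-M=a_4^2\,m$ gives $a_4^2=1\iff m=0$) followed by ``similar computations,'' i.e.\ integrating $m=0$, and your argument fills in exactly these steps together with a converse construction imitating that of Theorem \ref{th-31}. Your closing caveat is moreover a genuine (minor) flaw in the paper's statement rather than in your proof: integrating $m=0$ only yields $\kappa_3/\kappa_2=A\exp\big(\pm\int_0^s\kappa_1(t)\,dt\big)$, and the minus branch really does occur (for such a curve, $U=\frac{\kappa_3}{\kappa_2}\,\t+\frac{\kappa_3}{\kappa_2}\,\n+\b_2$ is constant, unit spacelike, and has $\langle\b_2,U\rangle=1$), so the corollary should carry the $\pm$ in the exponent.
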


As a consequence of Theorem \ref{th-31}, we obtain other characterization of $B_2$-slant helices. The first one is the following

\begin{corollary} Let $\alpha$ be a unit speed timelike curve in $\e_1^4$. Then $\alpha$  is a  $B_2$-slant helix if and only if there exists real numbers $C$ and $D$ such that
\begin{equation}\label{equi9}
\dfrac{\kappa_3}{\kappa_2}(s)=C\sinh\int_0^s\kappa_1(s)ds+D\cosh\int_0^s\kappa_1(s)ds,
\end{equation}
\end{corollary}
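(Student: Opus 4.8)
The plan is to deduce this corollary directly from Theorem \ref{th-31}, replacing its implicit characterization by the explicit solution of the underlying second order equation. Throughout I abbreviate $f=\kappa_3/\kappa_2$ and $\theta(s)=\int_0^s\kappa_1(x)\,dx$, so that $\theta'=\kappa_1$ and the prime differentiation satisfies $f'=\kappa_1\,df/d\theta$. By Theorem \ref{th-31}, being a $B_2$-slant helix is equivalent to the constancy of $\frac{1}{\kappa_1^2}(f')^2-f^2$, say equal to $m$, and the whole task reduces to showing that this constancy is equivalent to $f=C\sinh\theta+D\cosh\theta$ for suitable real numbers $C,D$.

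For the direction (\ref{equi9}) $\Rightarrow$ helix I would simply substitute. If $f=C\sinh\theta+D\cosh\theta$, then $\frac{1}{\kappa_1}f'=C\cosh\theta+D\sinh\theta$, and using $\cosh^2\theta-\sinh^2\theta=1$ a one-line computation gives
\[
\frac{1}{\kappa_1^2}(f')^2-f^2=C^2-D^2,
\]
which is constant; Theorem \ref{th-31} then certifies that $\alpha$ is a $B_2$-slant helix, the value of the invariant being $m=C^2-D^2$.

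For the converse I see two routes. The cleanest is to reuse the work already done in the proof of Theorem \ref{th-31}: when $\alpha$ is a $B_2$-slant helix the relations (\ref{equi7}) hold with $a_4\neq0$, and they express the constants $A,B$ linearly in terms of $f$ and $\frac{1}{\kappa_1}f'$ through the coefficient matrix with rows $(\sinh\theta,\cosh\theta)$ and $(\cosh\theta,\sinh\theta)$. This matrix has determinant $\sinh^2\theta-\cosh^2\theta=-1$, hence is invertible for every $s$; solving the $2\times2$ system for $f$ yields $f=\frac{A}{a_4}\sinh\theta-\frac{B}{a_4}\cosh\theta$, which is exactly (\ref{equi9}) with $C=A/a_4$ and $D=-B/a_4$. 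The second route is purely analytic: reparametrize by $\theta$, set $g(\theta)=f$, so that the constancy condition becomes $(\dot g)^2-g^2=m$; differentiating gives $\dot g(\ddot g-g)=0$, whence $\ddot g-g=0$ and $g=C\sinh\theta+D\cosh\theta$.

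The step I expect to be the main obstacle is precisely this last differentiation argument, since $(\dot g)^2-g^2=m$ is a \emph{nonlinear} first order equation and differentiating introduces the spurious factor $\dot g$. One must argue that $\ddot g-g$ vanishes identically: on the open set where $\dot g\neq0$ this is immediate, and by continuity of $\ddot g-g$ it extends to the closure, while on any interval where $\dot g\equiv0$ the function $g$ is constant and hence still of the required form with $C=0$. For this reason I would present the inversion of (\ref{equi7}) as the primary argument, because it produces the constants $C,D$ explicitly and sidesteps the regularity subtleties of the nonlinear ODE altogether.
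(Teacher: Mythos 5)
Your primary argument is correct and is essentially the paper's own proof. The converse direction (substitute (\ref{equi9}) into the invariant and invoke Theorem \ref{th-31}) is identical to the paper's, and your forward direction differs only cosmetically: the paper reads (\ref{equi9}) off directly from $a_2=\frac{\kappa_3}{\kappa_2}a_4$ in (\ref{equi4}) together with the explicit formula (\ref{equi6}) for $a_2$, taking $C=-A/a_4$, $D=-B/a_4$, whereas you invert the $2\times2$ system (\ref{equi7}); the discrepancy in the sign of $C$ traces to a sign inconsistency between (\ref{equi7}) and (\ref{equi5})--(\ref{equi6}) in the paper itself, and is immaterial since only the \emph{existence} of $C,D$ is asserted.

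One warning, though: your ``second route'' is genuinely broken, and not merely for regularity reasons. On an interval where $\dot g\equiv0$, the function $g=\kappa_3/\kappa_2$ is a \emph{nonzero} constant $c$ (nonzero because $\kappa_3\neq0$ is a standing assumption of the paper), and a nonzero constant is not of the form $C\sinh\theta+D\cosh\theta$ with $C=0$: the function $D\cosh\theta$ is nonconstant. So your proposed patch fails. Moreover this case genuinely occurs: any curve with $\kappa_3/\kappa_2\equiv c\neq0$ has $\frac{1}{\kappa_1^2}\big(\frac{\kappa_3}{\kappa_2}\big)^{\prime\,2}-\big(\frac{\kappa_3}{\kappa_2}\big)^2\equiv -c^2$ constant, yet its ratio is not of the form (\ref{equi9}); hence ``constancy of (\ref{equi8})'' does \emph{not} imply (\ref{equi9}), and route 2 cannot be completed from that constancy alone. (Such curves are in fact not $B_2$-slant helices --- feeding $a_4$ constant and $\kappa_3/\kappa_2$ constant into the system (\ref{u2}) forces all $a_i=0$, i.e.\ $U=0$ --- which is why the forward implication remains true; it also exposes that the paper's proof of the converse of Theorem \ref{th-31} tacitly needs $(\kappa_3/\kappa_2)'\not\equiv0$ to conclude $U'=0$.) Your decision to make the inversion of (\ref{equi7}) the primary argument is therefore exactly what saves the proof; just delete the claim that the constant case is ``still of the required form with $C=0$''.
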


\begin{proof} Assume that $\alpha$ is a  $B_2$-slant helix. From (\ref{equi4}) and (\ref{equi6}), the choice $C=-A/a_4$ and $D=-B/a_4$ yields  (\ref{equi9}).

We now suppose that (\ref{equi9}) is satisfied. A straightforward computation gives
$$\dfrac{1}{\kappa_1^2}\Big(\dfrac{\kappa_3}{\kappa_2}\Big)^{\prime\,2}
-\Big(\dfrac{\kappa_3}{\kappa_2}\Big)^2=C^2-D^2.$$
We now use Theorem \ref{th-31}.
\end{proof}

We end this section with a new characterization for  $B_2$-slant helices.  Let now assume that $\alpha$ is a  $B_2$-slant helix in $\e_1^4$. By differentiation (\ref{equi8}) with respect to $s$ we get
\begin{equation}\label{equi10}
\dfrac{1}{\kappa_1}\Big(\dfrac{\kappa_3}{\kappa_2}\Big)'\Big[\dfrac{1}{\kappa_1}
\Big(\dfrac{\kappa_3}{\kappa_2}\Big)'\Big]'
-\Big(\dfrac{\kappa_3}{\kappa_2}\Big)\Big(\dfrac{\kappa_3}{\kappa_2}\Big)'=0,
\end{equation}
and hence
$$\dfrac{1}{\kappa_1}\Big(\dfrac{\kappa_3}{\kappa_2}\Big)'=\dfrac{\Big(\dfrac{\kappa_3}{\kappa_2}\Big)
\Big(\dfrac{\kappa_3}{\kappa_2}\Big)'}{\Big[\dfrac{1}{\kappa_1}
\Big(\dfrac{\kappa_3}{\kappa_2}\Big)'\Big]'},$$
If we define a function $f(s)$ as
$$f(s)=\dfrac{\Big(\dfrac{\kappa_3}{\kappa_2}\Big)
\Big(\dfrac{\kappa_3}{\kappa_2}\Big)'}{\Big[\dfrac{1}{\kappa_1}
\Big(\dfrac{\kappa_3}{\kappa_2}\Big)'\Big]'},$$
then
\begin{equation}\label{equi11}
f(s)\kappa_1(s)=\Big(\dfrac{\kappa_3}{\kappa_2}\Big)'.
\end{equation}
By using (\ref{equi10}) and (\ref{equi11}), we have
$$f'(s)=\dfrac{\kappa_1\kappa_3}{\kappa_2}.$$
Conversely, consider the function $f(s)=\dfrac{1}{\kappa_1}\Big(\dfrac{\kappa_3}{\kappa_2}\Big)'$ and assume that $f'(s)=\dfrac{\kappa_1\kappa_3}{\kappa_2}$.  We compute
\begin{equation}\label{equi12}
\frac{d}{ds}\Bigg[\dfrac{1}{\kappa_1^2}\Big(\dfrac{\kappa_3}{\kappa_2}\Big)^{\prime\,2}
-\dfrac{\kappa_3^2}{\kappa_2^2}\Bigg]= \dfrac{d}{ds}\Bigg[f(s)^2-\dfrac{f'(s)^2}{\kappa_1^2}\Bigg]:=\varphi(s).
\end{equation}
As $f(s)f'(s)=\Big(\dfrac{\kappa_3}{\kappa_2}\Big)\Big(\dfrac{\kappa_3}{\kappa_2}\Big)'$ and
$f''(s)=\kappa_1'\Big(\dfrac{\kappa_3}{\kappa_2}\Big)+\kappa_1\Big(\dfrac{\kappa_3}{\kappa_2}\Big)'$
we obtain
$$f'(s)f''(s)=\kappa_1\kappa_1'\Big(\dfrac{\kappa_3}{\kappa_2}\Big)^2+\kappa_1^2\Big(\dfrac{\kappa_3}{\kappa_2}\Big)
\Big(\dfrac{\kappa_3}{\kappa_2}\Big)'.$$
As consequence of above computations
$$\varphi(s)=2\Bigg(f(s)f'(s)-\dfrac{f'(s)f''(s)}{\kappa_1^2}+\frac{\kappa_1' f'(s)^2}{\kappa_1^3}\Bigg)=0,$$
that is, the function $\dfrac{1}{\kappa_1^2}\Big(\dfrac{\kappa_3}{\kappa_2}\Big)^{\prime\,2}
-\Big(\dfrac{\kappa_3}{\kappa_2}\Big)^2$ is constant. Therefore we have proved the following

\begin{theorem} Let $\alpha$ be a unit speed timelike curve in $\e_1^4$. Then $\alpha$  is a  $B_2$-slant helix if and only if  the function $f(s)=\dfrac{1}{\kappa_1}\Big(\dfrac{\kappa_3}{\kappa_2}\Big)'$ satisfies
$f'(s)=\dfrac{\kappa_1\kappa_3}{\kappa_2}$.
\end{theorem}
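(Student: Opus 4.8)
The plan is to deduce this statement from Theorem~\ref{th-31}, which already characterizes $B_2$-slant helices by the constancy of $\frac{1}{\kappa_1^2}(\kappa_3/\kappa_2)'^{\,2}-(\kappa_3/\kappa_2)^2$. To keep the bookkeeping light, I would write $g=\kappa_3/\kappa_2$ and $f=\frac{1}{\kappa_1}g'$, so that $g'=\kappa_1 f$, the function of Theorem~\ref{th-31} is exactly $f^2-g^2$, and the target identity $f'=\kappa_1\kappa_3/\kappa_2$ reads simply $f'=\kappa_1 g$. In this notation the whole assertion collapses to the equivalence: $f^2-g^2$ is constant if and only if $f'=\kappa_1 g$, where $f$ is the fixed function $g'/\kappa_1$.

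For the forward implication I would differentiate the relation $f^2-g^2=\text{const}$, obtaining $f f'-g g'=0$, that is $f f'=g g'$. Substituting $g'=\kappa_1 f$ turns this into $f f'=\kappa_1 g\, f$, and cancelling $f$ yields $f'=\kappa_1 g=\kappa_1\kappa_3/\kappa_2$. This is essentially the content of the chain (\ref{equi10})--(\ref{equi11}): equation (\ref{equi10}) is precisely $f f'=g g'$, and (\ref{equi11}) records $g'=\kappa_1 f$.

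For the converse, assume $f'=\kappa_1 g$. The key idea is to use this hypothesis to re-express the function of Theorem~\ref{th-31} in terms of $f$ alone: from $g=f'/\kappa_1$ we get $g^2=(f')^2/\kappa_1^2$, hence $f^2-g^2=f^2-(f')^2/\kappa_1^2$. I would then differentiate this last expression and show the result vanishes. Its derivative is $2\big(f f'-f' f''/\kappa_1^2+\kappa_1'(f')^2/\kappa_1^3\big)$, and the hypothesis supplies the two identities needed to evaluate it: $f f'=g g'$ (since $f f'=(g'/\kappa_1)(\kappa_1 g)=g g'$) and $f''=\kappa_1' g+\kappa_1 g'$ (differentiating $f'=\kappa_1 g$). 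Substituting these together with $(f')^2=\kappa_1^2 g^2$, the three contributions become $g g'$, $\kappa_1' g^2/\kappa_1+g g'$, and $\kappa_1' g^2/\kappa_1$ respectively, which cancel in pairs to give $0$. Therefore $f^2-g^2$ is constant and Theorem~\ref{th-31} finishes the argument.

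The only delicate point is the cancellation of $f$ in the forward direction, which is legitimate only where $f=\frac{1}{\kappa_1}(\kappa_3/\kappa_2)'\neq 0$. At isolated zeros of $f$ the identity $f'=\kappa_1 g$ extends by continuity, but the genuinely degenerate case $\kappa_3/\kappa_2\equiv\text{const}$ (where $f\equiv 0$) is an exception to the stated equivalence, since there $f'=0$ while $\kappa_1\kappa_3/\kappa_2\neq 0$; this case should be tacitly excluded. Apart from this caveat I expect no conceptual obstacle: the converse is a purely algebraic verification, and the main effort is simply the careful bookkeeping of the pairwise cancellation in $\varphi$.
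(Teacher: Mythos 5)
Your proof is correct and essentially identical to the paper's: your forward direction (differentiate the invariant of Theorem \ref{th-31}, substitute $g'=\kappa_1 f$, cancel $f$) is exactly the paper's passage through (\ref{equi10}) and (\ref{equi11}), and your converse is precisely the paper's computation of $\varphi$ in (\ref{equi12}) followed by the same appeal to Theorem \ref{th-31}. One remark on your closing caveat: the degenerate case $\kappa_3/\kappa_2\equiv c\neq 0$ is in fact not an exception to the theorem under review but to Theorem \ref{th-31} itself, because such a curve is never a $B_2$-slant helix --- by (\ref{equi4}), $a_2=c\,a_4$ would be constant, forcing $a_1=-a_2'/\kappa_1=0$, then $a_2=-a_1'/\kappa_1=0$, then $a_4=0$ and $U=0$ --- so no tacit exclusion is needed here; moreover, for a genuine $B_2$-slant helix the relations in (\ref{equi4}) give $f=a_2'/(\kappa_1 a_4)=-a_1/a_4$ and hence $f'=-a_1'/a_4=\kappa_1 a_2/a_4=\kappa_1\kappa_3/\kappa_2$, a division-free proof of the forward direction that avoids the zero-set issue (which the paper itself passes over silently) entirely.
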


\vspace*{1cm}
\emph{Complete address:}

 Ahmad T. Ali\\Mathematics Department\\
 Faculty of Science, Al-Azhar University\\
 Nasr City, 11448, Cairo, Egypt\\
email: atali71@yahoo.com\\
\vspace*{.5cm}\\
Rafael L\'opez\\
Departamento de Geometr\'{\i}a y Topolog\'{\i}a\\
Universidad de Granada\\
18071 Granada, Spain\\
email: rcamino@ugr.es

\end{document}